\documentclass[12pt,a4paper]{amsart}
\usepackage{graphics}
\usepackage{epsfig}
\usepackage{graphicx}
\theoremstyle{plain}
\usepackage{amssymb}
\usepackage{color}
\advance\hoffset-20mm \advance\textwidth40mm

\newtheorem{theorem}{Theorem}
\newtheorem{lemma}{Lemma}
\newtheorem*{theo*}{Theorem}

\newtheorem{corollary}{Corollary}
\theoremstyle{definition}

\newtheorem*{definition*}{Definition}


\DeclareMathOperator{\Aut}{Aut}

\begin{document}
\sloppy
%

\title[Decomposition  of matrices  from...  ]
{Decomposition  of matrices from $SL_  2(K[x, y])$ }

\author
{Y.Chapovskyi, O.Kozachok, A.Petravchuk}
\address{Institute of Mathematics, National Academy of Sciences of Ukraine,
Tereschenkivska street, 3, 01004 Kyiv, Ukraine}
\email{safemacc@gmail.com}
\address{ Faculty of Mechanics and Mathematics,
Taras Shevchenko National University of Kyiv, 64, Volodymyrska street, 01033  Kyiv, Ukraine}
\email{kozachok.oleksandra@gmail.com}
\address{ Faculty of Mechanics and Mathematics,
	Taras Shevchenko National University of Kyiv, 64, Volodymyrska street, 01033  Kyiv, Ukraine}
\email{ petravchuk@knu.ua, apetrav@gmail.com}

\date{\today}
\keywords{unimodular row; special linear group; generators; decomposition}
\subjclass[2000]{20H05, 20H25, 15A23}

\begin{abstract}
Let $\mathbb{K}$ be an algebraically closed field of characteristic zero and $\mathbb{K}[x,y]$ the polynomial ring. The group $\text{SL}_{2}\left(\mathbb{K}[x,y]\right)$ of all  matrices with  determinant equal to $1$  over $\mathbb{K}[x,y]$ can not be generated by elementary matrices. The known counterexample was pointed out  by P.M. Cohn. Conversely, A.A.Suslin proved that the group $\text{SL}_{r}\left(\mathbb{K}[x_{1},\dots,x_{n}]\right)$ is generated by elementary matrices for $r\ge 3$ and arbitrary $n\geq 2$, the same is true for $n=1$ and arbitrary $r.$ 
It is proven that any  matrix  from $\text{SL}_{2}\left(\mathbb{K}[x,y]\right)$ with at least one entry of degree $\le 2$ is either a product of  elementary matrices or a product of elementary matrices and of a  matrix similar to the one pointed out by P. Cohn. 
For any  matrix   $\begin{pmatrix}\begin{array}{cc} f & g\\ -Q & P \end{array}\end{pmatrix}\in\text{SL}_{2}\left(\mathbb{K}[x,y]\right)$,  we obtain formulas for the homogeneous components $P_i , Q_i$ for  the  unimodular  row $(-Q, P) $ as combinations of homogeneous components of the polynomials $f, g, $ respectively,  with the same coefficients. 

\end{abstract}
\maketitle

\large
\section{Introduction}
Let $\mathbb{K}$ be a field and $A=\mathbb{K}[x_{1},\ldots, x_{n}]$ the polynomial ring in $n$ variables. The  group $GL_{r}(A)$ of all invertible matrices and its subgroup $SL_{r}(A)$ of matrices with  determinant of $1$   was studied by many authors from different points of view (see, for example, \cite{Cohn2}, \cite{Suslin}, \cite{Green},  \cite{Vavilov}, the last paper contains  an extensive literature review). One of the  important questions in studyng $GL_{r}(A)$ (and $SL_{r}(A)$) is the question about generators and relations of these groups. Classical papers of   A.A.Suslin  and P.M.Cohn  answer  this question. In \cite{Suslin}, it was proved  that the group $SL_{r}(A)$ is generated by elementary matrices (or, in other terminology, elementary transvections) if $r\geq 3$ for arbitrary $n\geq 2$; in case $n=1, r\geq 2 $ the proof is elementary. If  $r=2$ and $n\geq 2,$ then the group 
$SL_{r}(A)$ cannot be generated by elementary matrices \cite{Cohn2}. The counterexample from \cite{Cohn2} is the matrix 
$\begin{pmatrix}\begin{array}{cc} x^{2} & xy-1\\ xy+1 & y^{2} \end{array}\end{pmatrix}$ from the group $SL_{2}(\mathbb K[x, y])$. A question arises: how typical is this counterexample? We prove (Theorem 1) that any matrix from $SL_{2}(\mathbb K[x, y])$ with at least one entry of degree $\leq 2$ is either a product of elementary matrices  or a product of a matrix similar to the one pointed out in \cite{Cohn2} and elementary matrices. 

We consider the group $SL_{2}(\mathbb K[x, y])$  over an algebraically closed field $\mathbb K$  of characteristic zero.  Let us recall some definitions and notations. An elementary matrix from the group $SL_{2}(\mathbb K[x, y])$  is of the form $\begin{pmatrix}\begin{array}{cc} 1 & h\\ 0 & 1 \end{array}\end{pmatrix}$ or of the form $\begin{pmatrix}\begin{array}{cc} 1 & 0\\ h & 1 \end{array}\end{pmatrix}$, $h\in\mathbb{K}[x, y].$ A row $(f, g)\in\left(\mathbb{K}[x,y]\right)^{2}$ is called unimodular if there exist polynomials $P, Q\in\mathbb{K}[x,y]$ such that $Pf+Qg=1$ (about some properties of unimodular rows see, for example, \cite{Kumar}).  The latter means that the matrix $\begin{pmatrix}\begin{array}{cc} f & g\\ -Q & P \end{array}\end{pmatrix}$ has the  determinant of $1.$ The unimodular row $(-Q, P)$ will be called associated with the row $(f, g)$. Note that by multiplying  a unimodular row $(f,g)$ from the right  by the matrix $\begin{pmatrix}\begin{array}{cc} 1 & h\\ 0 & 1 \end{array}\end{pmatrix}$, where  $h\in\mathbb{K}[x, y]$, the result is  the unimodular row $(f, g+fh)$. Multiplying   unimodular rows $(f, g)$ by an elementary matrix from the right  defines a linear transformation of the free module of rank 2 over $\mathbb{K}[x,y].$ We call such a transformation an elementary transformation.  The automorphism group 
$ \Aut (\mathbb K[x, y])$ acts naturally on the group $SL_{2}(\mathbb K[x, y])$ by the rule: for any $\theta \in \Aut (\mathbb K[x, y]) $  and $ A=(a_{ij})\in SL_{2}(\mathbb K[x, y])$ put $A^{\theta}=(a_{ij}^{\theta})$ (note that for any elementary matrix $B$ the matrix $B^{\theta} , \theta \in \Aut (\mathbb K[x, y])$ is also an elementary matrix).
We will use unimodular rows, and then the main result will be reformulated in matrix language.   In Lemmas \ref{lem5}, \ref{lem6}, and \ref{lem7} we  prove that any unimodular row $(f, g)$ with $\deg f\leq 2$ is, up to action of an automorphism $\theta\in\Aut\left(\mathbb{K}[x, y]\right),$ one of the forms: $(1, 0) $, or  $(x^{2},\psi (y)x+\gamma)$ with arbitrary $\psi (y)\in \mathbb K[x, y], $  or $(xy-\gamma, x^{k})$, $\gamma\in\mathbb{K}^{*}, k\in \mathbb Z,$ $k\geq 2$.
As a consequence, we obtain the main result ({Theorem \ref{th1}}). 

In the second part of the paper, we consider unimodular rows $(f, g)$, ${\rm deg} f={\rm deg} g=n, n\geq 1.  $ We obtain formulas for homogeneous components $P_i , Q_i$ of  the associated unimodular  row $(-Q, P) $ as combinations of homogeneous components of polynomials $f, g, $ respectively,  with the same coefficients  (Theorem 2). These formulas can be used for studying matrices from $SL_  2(\mathbb K[x, y])$ with entries of any degree.

\section{Some properties of unimodular rows over $\mathbb{K}[x, y]$}
Here   some technical results are collected about unimodular rows (of length $2$) over the polynomial ring $\mathbb{K}[x, y]$.
\begin{lemma}\label{lem1}
	
	{\rm (1)}
	Let $(f, g)$ be a unimodular row over the ring $\mathbb{K}[x, y]$ and $P, Q$  be  polynomials such that $Pf+Qg=1$. Then the rows $(P,Q )$, $(P, g)$ and $(f, Q)$ are also unimodular.
	
	{\rm (2)}
	If $(f, g)$ is a  unimodular row, then for any endomorphism $\theta$ of the ring $\mathbb{K}[x, y],$ the row $(\theta (f), \theta (g))$ is also unimodular.
\end{lemma}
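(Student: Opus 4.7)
The plan is to observe that unimodularity of $(f,g)$ is the single equation $Pf+Qg=1$, and that both parts of the lemma follow by reading this equation in different ways or by pushing it through the endomorphism.

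For part (1), I would start from the given identity $Pf+Qg=1$ and simply rewrite it three times, reading off a pair of coefficient polynomials that witness unimodularity of each row. Rewriting as $fP+gQ=1$ shows that $(P,Q)$ is unimodular (with witnesses $(f,g)$). The same identity, read as $fP+Qg=1$, shows that $(P,g)$ is unimodular with witnesses $(f,Q)$, and as $Pf+gQ=1$ that $(f,Q)$ is unimodular with witnesses $(P,g)$. No calculation beyond reordering and relabelling of factors is required.

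For part (2), I would apply the endomorphism $\theta$ to both sides of $Pf+Qg=1$. Since $\theta$ preserves the ring operations and fixes $1$, one obtains $\theta(P)\theta(f)+\theta(Q)\theta(g)=1$, exhibiting $\theta(P),\theta(Q)\in\mathbb{K}[x,y]$ as witnesses for the unimodularity of $(\theta(f),\theta(g))$. There is no genuine obstacle here; the lemma is essentially a restatement of the definition and is being recorded for convenient reference in later arguments, for instance when one wishes to replace $(f,g)$ by the pair of coefficients $(P,Q)$ or to compose a unimodular row with a change of variables.
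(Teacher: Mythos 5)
Your proof is correct and is exactly the intended argument: the paper in fact states Lemma \ref{lem1} without proof, treating it as immediate from the definition, and your two observations (reading $Pf+Qg=1$ with the roles of the factors swapped, and applying $\theta$ to the identity) are precisely what that omission relies on.
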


\begin{lemma}\label{lem2}
	Let $f, g, P, Q\in\mathbb{K}[x, y]$ be nonconstant polynomials such that $Pf+Qg=0$. Then there exist polynomials $h_{1}, h_{2}, \varphi, \psi\in\mathbb{K}[x, y]$ such that $f=\varphi h_{1}$, $g=\varphi h_{2}$, $\gcd (h_{1}, h_{2})=1$, $P=\psi h_{2}$, $Q=-\psi h_{1}$.
\end{lemma}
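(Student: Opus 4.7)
The plan is to use that $\mathbb{K}[x,y]$ is a UFD, so greatest common divisors exist and Euclid's lemma applies to coprime factors.

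First I would set $\varphi := \gcd(f,g)$ and write $f = \varphi h_1$, $g = \varphi h_2$ with $\gcd(h_1,h_2) = 1$. Since $f,g$ are nonconstant, $\varphi$ is nonzero; note also that $\varphi, h_1, h_2$ are determined up to units in $\mathbb{K}^{*}$, which will not affect the shape of the final expression. This step is routine and just fixes notation consistent with the statement.

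Next I would substitute into the hypothesis: $Pf + Qg = \varphi(Ph_1 + Qh_2) = 0$. Since $\mathbb{K}[x,y]$ is an integral domain and $\varphi \neq 0$, cancellation gives $Ph_1 = -Qh_2$. Now I would invoke coprimality: $h_2 \mid Ph_1$ and $\gcd(h_1,h_2)=1$ force $h_2 \mid P$ (this is Euclid's lemma in the UFD $\mathbb{K}[x,y]$). Write $P = \psi h_2$ for some $\psi \in \mathbb{K}[x,y]$.

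Finally, substituting $P = \psi h_2$ into $Ph_1 = -Qh_2$ yields $\psi h_2 h_1 = -Qh_2$, and cancelling the nonzero $h_2$ gives $Q = -\psi h_1$, as required. The only subtlety is to check that each cancellation is legitimate, which it is because $\varphi$ and $h_2$ are nonzero in the domain $\mathbb{K}[x,y]$; there is no real obstacle here, the lemma is essentially a direct application of the UFD property of $\mathbb{K}[x,y]$ together with Euclid's lemma.
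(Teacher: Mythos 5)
Your proof is correct and follows essentially the same route as the paper: factor out $\varphi=\gcd(f,g)$, cancel it, and use coprimality of $h_1,h_2$ to extract the divisibility giving $\psi$. The only difference is a minor simplification: the paper additionally writes $P=\psi_0P_0$, $Q=\psi_0Q_0$ with $\psi_0=\gcd(P,Q)$ and deduces that $P_0$ and $h_2$ are associates via mutual divisibility, whereas you apply Euclid's lemma in the UFD $\mathbb{K}[x,y]$ directly to conclude $h_2\mid P$, which is cleaner and equally valid.
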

\begin{proof}
	Let $\varphi=\gcd (f, g)$, $\psi_{0}=\gcd (P, Q)$. Then $f=\varphi h_{1}$, $g=\varphi h_{2}$ for coprime polynomials $h_{1}, h_{2}\in\mathbb{K}[x, y]$. Analogously $P=\psi_{0} P_{0}$, $Q=\psi_{0}  Q_{0}$ for some coprime $P_{0}, Q_{0}$. Then by the conditions of the lemma we have
	$$0=Pf+Qg=\varphi\psi_{0}\left(P_{0}h_{1}+Q_{0}h_{2}\right).$$
	The latter equalities imply
	\begin{equation}\label{eq1}
		P_{0}h_{1}+Q_{0}h_{2}=0.
	\end{equation}
	Since $\gcd (h_{1}, h_{2})=1$ we have $P_{0}\mid h_{2}$ and $h_{2}\mid P_{0}$. But the $P_{0}=\alpha h_{2}$ for some $\alpha\in\mathbb{K}^{*}$. Analogously $Q_{0}=\beta h_{1}$ for some $\beta\in\mathbb{K}^{*}$, It follows from \ref{eq1} that $\beta=-\alpha$. Besides, $P=\psi_{0} P_{0}=\psi_{0}\alpha h_{2}$ and $Q_{0}=-\psi_{0}\alpha h_{1}$. Denoting $\psi=\psi_{0}\alpha$ we get $P=\psi h_{2}$, $Q=-\psi h_{1}$.
\end{proof}

\begin{lemma}\label{lem3}
	Let $(f, g)$ be a unimodular row of nonconstant polynomials over $\mathbb{K}[x, y]$ and let $f=f_{0}+\dots +f_{n}$, $g=g_{0}+\dots +g_{l}$ be  decomposition of $f$ and $g,$ respectively, in sums of homogeneous components. Then the polynomials $f_{n}$ and $g_{l}$ are not coprime, i.e., $\deg\gcd (f_{n}, g_{l})\ge 1$.
\end{lemma}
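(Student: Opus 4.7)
I would argue by contradiction: suppose $\gcd(f_n, g_l) = 1$ in the UFD $\mathbb{K}[x,y]$. Using unimodularity, fix polynomials $P, Q \in \mathbb{K}[x,y]$ with $Pf + Qg = 1$, and among all such pairs choose one with $\deg P + \deg Q$ minimal. Since $f$ and $g$ are nonconstant, neither $P$ nor $Q$ can vanish (otherwise $f$ or $g$ would be a unit), so both have well-defined degrees $s = \deg P$ and $t = \deg Q$ with leading homogeneous components $P_s, Q_t$.

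The first observation is that the top homogeneous component of $Pf + Qg$ must vanish, since the right-hand side has degree $0$ while $n, l \geq 1$. The leading forms of $Pf$ and $Qg$ are $P_s f_n$ and $Q_t g_l$, of degrees $s+n$ and $t+l$ respectively. If these degrees differ, the larger leading form cannot be cancelled, which is impossible; hence $s+n = t+l$ and
\[ P_s f_n + Q_t g_l = 0. \]

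Now the coprimality assumption enters: since $\mathbb{K}[x,y]$ is a UFD with $\gcd(f_n, g_l)=1$, this relation forces $g_l \mid P_s$ and $f_n \mid Q_t$ (essentially the homogeneous specialisation of Lemma \ref{lem2}). Writing $P_s = R\, g_l$ and $Q_t = -R\, f_n$ for a homogeneous polynomial $R$ of degree $s-l = t-n \geq 0$, I perform the syzygy substitution $P' = P - gR$ and $Q' = Q + fR$. A direct computation gives $P' f + Q' g = Pf + Qg = 1$, while by construction the degree-$s$ component of $P'$ and the degree-$t$ component of $Q'$ both cancel, so $\deg P' < s$ and $\deg Q' < t$. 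This contradicts minimality of $\deg P + \deg Q$, completing the proof.

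The main obstacle is psychological rather than technical: one has to resist the temptation to look at the full polynomials $P, Q$ and instead extract information purely from their leading forms. The only bookkeeping points are verifying that $P', Q'$ are both nonzero (else $f$ or $g$ would again be forced to be a unit) and that the UFD divisibility step applies even when $P_s$ or $Q_t$ might a priori be a constant --- but the relation $g_l \mid P_s$ already forces $s \geq l \geq 1$ and similarly $t \geq n \geq 1$, ruling out the degenerate cases.
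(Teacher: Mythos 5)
Your proposal is correct and follows essentially the same route as the paper: choose a B\'ezout pair $(P,Q)$ with $\deg P+\deg Q$ minimal, observe that the leading forms satisfy $P_s f_n + Q_t g_l=0$, use coprimality (the homogeneous case of Lemma \ref{lem2}) to write $P_s=Rg_l$, $Q_t=-Rf_n$, and then the substitution $P-gR$, $Q+fR$ lowers degrees, contradicting minimality. Your write-up is in fact slightly tidier on the side conditions (nonvanishing of $P,Q$ and the equality $s+n=t+l$) than the paper's own argument.
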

\begin{proof}
	By the conditions of the lemma there exist polynomials $P, Q\in\mathbb{K}[x, y]$ such that $Pf+Qg=1$. Let 
	$$P=P_{0}+\dots +P_{m}, \  Q=Q_{0}+\dots +Q_{k}$$ be their decomposition into sums of homogeneous components. It follows from the equality $Pf+Qg=1$ that $P_{m}f_{n}+Q_{k}g_{l}=0$ (it is obvious that $m+n=k+l$). Assume on the contrary that $\gcd (f_{n}, g_{l})=1$. Then, by Lemma \ref{lem2}, we have 
	\begin{equation}\label{eq2}
		P_{m}=\psi g_{l}, \  Q_{k}=-\psi f_{n}
	\end{equation}
	for a polynomial $\psi\in\mathbb{K}[x, y]$  with $\deg \psi=m-l=k-n$. Denote by $\Omega$ the set of all pairs $(P, Q)$ of polynomials that satisfy the equality $Pf+Qg=1$. Choose a pair $(P, Q)\in\Omega$ such that the sum $m+k=\deg P+\deg Q$ is minimal. Then the equalities (\ref{eq2}) imply that 
	$$\deg (P-\psi g) + \deg (Q+\psi f) <\deg P+\deg Q.$$ Besides, the equality holds: $(P-\psi g)f+(Q+\psi f)g=1$. The latter contradicts the choice of the pair $(P, Q)$. This contradiction shows that $\deg\gcd (f_{n}, g_{l})\ge 1$.
\end{proof}
\begin{corollary}\label{cor1}
	Let $f, g\in\mathbb{K}[x, y]$ be a unimodular row. If $\deg f=1$, then $g=hf+c$ for some $h\in\mathbb{K}[x, y]$, $c\in\mathbb{K}^{*}$.
\end{corollary}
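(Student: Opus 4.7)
The plan is to apply Lemma \ref{lem3} iteratively to perform a Euclidean-type reduction of $g$ modulo $f$, ending at a constant remainder, and then to verify this remainder is nonzero using unimodularity. The key input is that since $\deg f = 1$, the top homogeneous component $f_1$ of $f$ is a nonzero linear form and therefore irreducible in $\mathbb{K}[x, y]$.

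If $g$ is already constant, then $g \neq 0$ (otherwise $(f, 0)$ is not unimodular, as $f$ is not a unit), and the conclusion holds with $h = 0$. Otherwise, write $g = g_0 + \dots + g_l$ with $l = \deg g \geq 1$ and $g_l \neq 0$. Lemma \ref{lem3} gives $\deg \gcd(f_1, g_l) \geq 1$; since $f_1$ is an irreducible linear form, this forces $f_1 \mid g_l$, so we can write $g_l = h_{l-1} f_1$ for some homogeneous $h_{l-1}$ of degree $l - 1$. Set $g' = g - h_{l-1} f$; the leading component of $h_{l-1} f$ equals $h_{l-1} f_1 = g_l$, hence $\deg g' \leq l - 1$. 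Because $(f, g')$ is obtained from $(f, g)$ by an elementary transformation, it remains unimodular by Lemma \ref{lem1}.

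Iterating (induction on $\deg g$) produces $H \in \mathbb{K}[x, y]$ and $c \in \mathbb{K}$ with $g = Hf + c$ such that $(f, c)$ is unimodular. If $c = 0$, unimodularity would yield $Pf = 1$ for some $P$, contradicting $\deg f = 1$; hence $c \in \mathbb{K}^*$, and taking $h = H$ completes the argument. No serious obstacle arises: the only point that needs to be checked is that the subtraction strictly drops the degree, but this is immediate from $\deg(h_{l-1} f) = l$ with leading part $g_l$.
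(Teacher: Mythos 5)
Your proposal is correct and follows essentially the same route as the paper: apply Lemma \ref{lem3} to see $f_1$ divides the leading form of $g$, subtract a multiple of $f$ to lower $\deg g$, iterate down to a constant, and use unimodularity to see the constant is nonzero. You merely spell out a couple of details the paper leaves implicit (irreducibility of the linear form $f_1$ and why $c\ne 0$), which is fine.
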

\begin{proof}
	Let $$f=f_{0}+f_{1}, \  g=g_{0}+g_{1}+\dots +g_{l}$$ 
	be the decomposition of polynomials into sums of homogeneous components. Then by {Lemma \ref{lem3}} we have $\deg\gcd (f_{1}, g_{l})\ge 1$. The latter means that $g_{l}$ is divisible by $f_{1}$, i.e. $g_{l}=h_{1} f_{1}$ for some polynomial $h_{1}\in\mathbb{K}[x, y]$. But then the row $(f, g-h_1f)$ is unimodular and $\deg (g-h_1f)<\deg g$. Continuing such considerations we obtain a unimodular row $(f, g-hf)$ for some $h\in\mathbb{K}[x, y]$ such that $\deg (g-hf)=0$, i.e. $g-hf=c$. Obviously $c\ne 0$ and we get $g=hf+c$, $c\in\mathbb{K}^{*}$.
\end{proof}

Let us recall that any quadratic curve $f(x, y)=0$, $\deg f=2$ is reduced by linear transformations of variables to one of the  known  canonical forms. This can be reformulated as follows:

\begin{lemma}\label{lem4}
	Let $f(x, y)\in\mathbb{K}[x, y]$, $\deg f=2$. Then there exist an affine automorphism $\theta$ of  the ring $\mathbb{K}[x, y]$ of the form  $\theta (x)=\alpha_{1}x+\beta_{1}y+\gamma_{1}$, $\theta (y)=\alpha_{2}x+\beta_{2}y+\gamma_{2}$ such that $\theta (f)$ is a polynomial of the following type:
	
	{\rm (1)}
	$f (x, y)=x^{2}+\gamma$, $\gamma\in\mathbb{K}$,
	
	{\rm (2)}
	$f (x, y)=x^{2}+y$,
	
	{\rm (3)}
	$f (x, y)=xy+\gamma$, $\gamma\in\mathbb{K}$.
\end{lemma}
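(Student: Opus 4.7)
The plan is to apply the standard classification of conics over an algebraically closed field of characteristic zero. Write $f = Q + L + c$, where $Q$ is the homogeneous quadratic part, $L$ the linear part, and $c$ the constant. Since $\deg f = 2$, the form $Q$ is nonzero. Viewed as a symmetric bilinear form, $Q$ has rank $1$ or $2$, and I will handle these cases separately, first normalizing $Q$ by a linear change of variables and then finishing by a translation (both of which are affine automorphisms of $\mathbb{K}[x,y]$).

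In the rank $2$ case, the binary quadratic form $Q$ is non-degenerate, and because $\mathbb{K}$ is algebraically closed $Q$ factors as a product of two linearly independent linear forms $L_1 L_2$. Taking $x' = L_1$, $y' = L_2$ as new coordinates, we may assume $Q = xy$, so $f = xy + \alpha x + \beta y + c$ for some $\alpha, \beta, c \in \mathbb{K}$. The translation $x \mapsto x - \beta$, $y \mapsto y - \alpha$ then absorbs the linear terms into the quadratic part, producing $f = xy + \gamma$ with $\gamma = c - \alpha\beta$, which is case (3).

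In the rank $1$ case, $Q$ is a scalar multiple of the square of a single linear form; picking that linear form as one new coordinate and any independent linear form as the other, and rescaling, we may assume $Q = x^2$, so $f = x^2 + \alpha x + \beta y + c$. Completing the square via $x \mapsto x - \alpha/2$ removes the $\alpha x$ term and reduces $f$ to $x^2 + \beta y + c'$. If $\beta = 0$, this is already of the form $x^2 + \gamma$ (case (1)); if $\beta \neq 0$, the affine substitution $y \mapsto (y - c')/\beta$ yields $f = x^2 + y$ (case (2)).

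I do not foresee a serious obstacle: each step uses an affine automorphism, and the argument is essentially verification. The one point requiring attention is the factorization of $Q$ in the rank $2$ case, which is exactly where algebraic closure of $\mathbb{K}$ is used; characteristic zero enters only through the freedom to complete the square and to speak of rank of a symmetric bilinear form in the usual way.
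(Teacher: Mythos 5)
Your proof is correct and follows exactly the route the paper itself relies on: the paper gives no argument for this lemma, simply recalling the classical reduction of a degree-two polynomial to canonical form by affine changes of variables, which is precisely the rank-based normalization, completion of the square, and translation that you carry out. No gaps; the use of algebraic closure (to split the rank-two form and extract square roots) and of characteristic zero (really just characteristic $\neq 2$) is exactly where you say it is.
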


\begin{lemma}\label{lem5}
	Let $(f, g)$ be a unimodular row such that $f=x^{2}+y$. Then this row is reduced to the row $(1, 0)$ by elementary transformations, i.e. there exist elementary matrices $B_{1},\dots ,B_{k}$ such that $(f, g)B_{1},\dots ,B_{k}=(1, 0)$.
\end{lemma}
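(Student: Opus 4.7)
The plan is to exploit the special structure of $f=x^{2}+y$, which is monic and linear in $y$ when viewed in $\mathbb{K}[x][y]$; equivalently, $\mathbb{K}[x,y]/(f)\cong\mathbb{K}[x]$ via the substitution $y\mapsto -x^{2}$. First I would take a unimodularity identity $Pf+Qg=1$ and evaluate at $y=-x^{2}$, obtaining $Q(x,-x^{2})\cdot g(x,-x^{2})=1$ in $\mathbb{K}[x]$. Since the only units of $\mathbb{K}[x]$ are nonzero constants, this forces $g(x,-x^{2})=c$ for some $c\in\mathbb{K}^{*}$.

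Next, because $f$ is monic of degree one in $y$, Euclidean division in $\mathbb{K}[x][y]$ would yield $g=hf+r$ with $h\in\mathbb{K}[x,y]$ and $r\in\mathbb{K}[x]$. Evaluating at $y=-x^{2}$ identifies $r=g(x,-x^{2})=c$, so $g=hf+c$ with $c\in\mathbb{K}^{*}$.

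With this representation in hand, the reduction finishes by four elementary right-multiplications: first by $\begin{pmatrix} 1 & -h \\ 0 & 1 \end{pmatrix}$ to pass from $(f,g)$ to $(f,c)$; then by $\begin{pmatrix} 1 & 0 \\ -c^{-1}f & 1 \end{pmatrix}$ to obtain $(0,c)$; then by $\begin{pmatrix} 1 & 0 \\ c^{-1} & 1 \end{pmatrix}$ to obtain $(1,c)$; and finally by $\begin{pmatrix} 1 & -c \\ 0 & 1 \end{pmatrix}$ to reach $(1,0)$.

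The only real content lies in the reduction $g=hf+c$ with $c\in\mathbb{K}^{*}$; this is a direct analogue of Corollary~\ref{cor1}, but here exploiting the linearity of $f$ in $y$ rather than its overall degree. Once this step is secured, the remaining elementary operations are a routine emulation of Gauss--Jordan elimination and present no obstacle.
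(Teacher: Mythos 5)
Your proof is correct, and it shares the paper's key idea -- evaluating the identity $Pf+Qg=1$ along $y=-x^{2}$, i.e.\ using $\mathbb{K}[x,y]/(x^{2}+y)\cong\mathbb{K}[x]$ and the fact that units of $\mathbb{K}[x]$ are nonzero constants -- but the route to it is organized differently and is in fact a bit cleaner. The paper reduces $g$ modulo $f$ with respect to the variable $x$: it iteratively strips off the part of $g$ of $x$-degree $\ge 2$ by elementary transformations, arriving at a remainder $g_{0}(y)+g_{1}(y)x$ with coefficients still in $\mathbb{K}[y]$, and then needs an extra parity-of-degrees argument (even degrees from $g_{0}(-x^{2})$ versus odd degrees from $xg_{1}(-x^{2})$) to conclude $g_{1}=0$ and $g_{0}\in\mathbb{K}^{*}$. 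You instead exploit that $f$ is monic and linear in $y$ and perform a single Euclidean division in $\mathbb{K}[x][y]$, so the remainder lies in $\mathbb{K}[x]$ at once and is identified with $g(x,-x^{2})=c\in\mathbb{K}^{*}$ directly; this eliminates both the iteration and the parity argument. Your explicit chain of four elementary right-multiplications taking $(f,c)$ to $(1,0)$ is also correct (the paper dismisses this step as obvious). One small point: the division step $g=hf+r$ with $r\in\mathbb{K}[x]$ is itself what justifies that the passage from $(f,g)$ to $(f,c)$ is a single elementary transformation, so it is worth stating, as you do, that $h\in\mathbb{K}[x,y]$, which holds precisely because $f$ is monic in $y$.
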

\begin{proof}
	Let us write the polynomial $g$ as a polynomial of $x$ with coefficients depending on $y$, $$g (x, y)=g_{0}(y)+g_{1}(y)x+\dots +g_{k}(y)x^{k}.$$  Denote 
	$$h (x, y)=g_{2}(y)+g_{3}(y)x+\dots +g_{k}(y)x^{k-2}.$$ Then 
	$$(f, g)\cdot \begin{pmatrix}\begin{array}{cc} 1 & -h\\ 0 & 1 \end{array}\end{pmatrix}=(f, g_{0}(y)+g_{1}(y)x-yh(x, y)).$$
	Note that the polynomial $g^{(1)}=g_{0}(y)+g_{1}(y)x-yh(x, y) $ is of degree $<k $ on $x,$ i.e., $ {\rm deg}_xg<{\rm deg}_xg^{(1)}$.  Repeating this process for the unimodular row $(x^2+y, g^{(1)}) $ 
	we obtain as a result a unimodular row of the form $(x^2+y, g^{(s)}) $ for some $s\geq 2$ with ${\rm deg}_xg^{(s)}\leq 1. $
	So we can assume without loss of generality  that   $g(x, y)=g_{0}(y)+g_{1}(y)x$. By the conditions of the lemma,  there exist polynomials $P (x, y), Q (x, y)\in\mathbb{K}[x, y]$ such that 
	$$P (x, y)(x^{2}+y)+Q (x, y)(g_{0}(y)+g_{1}(y)x)=1.$$ Putting here $y=-x^{2}$ we get the equality 
	$$Q (x, -x^{2})(g_{0}(-x^{2})+xg_{1}(-x^{2}))=1.$$
	It follows from this equality that $g_{0}(-x^{2})+xg_{1}(-x^{2})=c$ for some $c\in\mathbb{K}^{*}$. Since $\deg_{x} g_{0}(-x^{2})$ is even and $\deg_{x} xg_{1}(-x^{2})$ is odd we get $g_{1}=0$ and $g_{0}(y)\in\mathbb{K}$. But then $g=g_{0}\in\mathbb{K}^{*}$ and the unimodular row $(x^{2}+y, g_{0})$ obviously is reduced to the row $(1, 0)$.
\end{proof}

\begin{lemma}\label{lem6} 
	Let $(f, g)$ be a unimodular row, where $f=x^{2}+\gamma$, $\gamma\in\mathbb{K}$. Then this row can be reduced by elementary transformations to either the row $(1, 0)$, or to the row $(x^{2}+\gamma, x\psi(y)+\delta)$, $\delta\in\mathbb{K}$, $\deg \psi (y)\ge 1$.
\end{lemma}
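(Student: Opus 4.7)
The plan is to follow the strategy of Lemma~\ref{lem5}. Writing $g(x,y)=g_{0}(y)+g_{1}(y)x+\dots+g_{k}(y)x^{k}$ as a polynomial in $x$ with coefficients in $\mathbb{K}[y]$, and right-multiplying the row by
\[
\begin{pmatrix}\begin{array}{cc} 1 & -h \\ 0 & 1 \end{array}\end{pmatrix}, \quad h:=g_{2}(y)+g_{3}(y)x+\dots+g_{k}(y)x^{k-2},
\]
replaces $g$ with $g-(x^{2}+\gamma)h$, whose $x$-degree is strictly smaller than $k$. Iterating, I may assume $g=g_{0}(y)+g_{1}(y)x$, with unimodularity preserved throughout by Lemma~\ref{lem1}.

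Next, fix polynomials $P,Q$ with $Pf+Qg=1$ and split on the value of $\gamma$. If $\gamma\ne 0$, then since $\mathbb{K}$ is algebraically closed there is $\alpha\in\mathbb{K}^{*}$ with $\alpha^{2}=-\gamma$; substituting $x=\alpha$ and $x=-\alpha$ in the unimodularity identity yields
\[
g_{0}(y)+\alpha g_{1}(y)\in\mathbb{K}^{*}, \qquad g_{0}(y)-\alpha g_{1}(y)\in\mathbb{K}^{*},
\]
and adding/subtracting (using $\mathrm{char}\,\mathbb{K}=0$) forces $g_{0},g_{1}\in\mathbb{K}$. If $\gamma=0$, substituting $x=0$ gives $Q(0,y)g_{0}(y)=1$, so $g_{0}(y)=\delta\in\mathbb{K}^{*}$, while $g_{1}(y)$ is unconstrained.

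If $g_{1}(y)$ has positive degree then necessarily $\gamma=0$ by the previous paragraph, and the row is already in the stated normal form $(x^{2}+\gamma,\,x\psi(y)+\delta)$ with $\psi:=g_{1}$. Otherwise $g_{1}=\beta\in\mathbb{K}$ and the row is $(x^{2}+\gamma,\,\delta+\beta x)$; if $\beta=0$ it reduces trivially to $(1,0)$, and if $\beta\ne 0$ then the swapped unimodular row $(\delta+\beta x,\,x^{2}+\gamma)$ satisfies the hypothesis of Corollary~\ref{cor1}, giving $x^{2}+\gamma=h(\delta+\beta x)+c$ with $c\in\mathbb{K}^{*}$; one elementary step now reduces the original row to $(c,\,\delta+\beta x)$, which reduces to $(1,0)$ by a few further elementary operations.

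The main obstacle is conceptual rather than computational: one must recognise that the only feature of $\gamma\ne 0$ that matters is the existence of two distinct roots of $x^{2}+\gamma$ (supplied by algebraic closure), which forces the coefficients $g_{0},g_{1}$ to collapse to constants and thereby rules out the ``exceptional'' normal form in the $\gamma\ne 0$ branch; the remaining bookkeeping around Corollary~\ref{cor1} (applied after the trivial swap of the two entries, which preserves unimodularity) and the final trivial reductions $(c,g')\to(1,0)$ are routine.
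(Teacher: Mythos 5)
Your proof is correct and follows essentially the same route as the paper: reduce $g$ modulo $x^{2}+\gamma$ by elementary transformations to the form $g_{0}(y)+g_{1}(y)x$, then evaluate the B\'ezout identity at the roots $\pm\sqrt{-\gamma}$ (resp.\ at $x=0$ when $\gamma=0$) to force $g_{0},g_{1}$ constant (resp.\ $g_{0}\in\mathbb{K}^{*}$). If anything, your endgame is more careful than the paper's: you dispose of the case where $g_{1}$ is a nonzero constant via Corollary \ref{cor1} and explicit elementary steps, whereas the paper asserts $g_{1}=0$ outright (which is not forced, e.g.\ $(x^{2}+1,\,x)$ is unimodular), and you also skip the paper's unnecessary preliminary reduction of the pair $(P,Q)$.
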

\begin{proof} 
	Write down the polynomial $g (x, y)$ as a polynomial of $x$ with coefficients in $\mathbb K[y] $  
	$$g=g_{0}(y)+g_{1}(y)+\dots +g_{k}(y)x^{k}.$$
	Repeating the consideration from the proof of Lemma \ref{lem5} one can assume without loss of generality that $g=g_{0}(y)+g_{1}(y)x$ for some polynomials $g_{0}(y)$ and $g_{1}(y)$. Since $(x^{2}+\gamma, g_{0}(y)+g_{1}(y)x)$ is a unimodular row,  there exist polynomials $P, Q\in\mathbb{K}$ such that 
	$$P (x^{2}+\gamma)+Q (g_{0}(y)+g_{1}(y)x)=1.$$
	Note that for any polynomial $A(x, y)\in\mathbb{K}[x, y ]$, the polynomials $$\overline{P}(x, y)=P (x, y)+A(x, y)g (x, y), \ \overline{Q}(x, y)=Q (x, y)-A(x, y)(x^{2}+\gamma)$$
	also satisfy the equality $(x^{2}+\gamma)\overline{P}+g(x, y)\overline{Q}=1$. Therefore, without loss of generality, one can reduce the unimodular row $(P, Q)$ by elementary transformations to the row $(P, Q_{0}(y)+Q_{1}(y)x)$ without changing the initial unimodular row $(x^{2}+\gamma, g(x, y))$. We get the equality 
	\begin{equation}\label{eq21}
		P (x, y)(x^{2}+\gamma)+(Q_{0}(y)+Q_{1}(y)x)(g_{0}(y)+g_{1}(y)x)=1.
	\end{equation}
	First, let $\gamma\ne 0$. Substituting in formulas (\ref{eq21}) $x$  for $\sqrt{-\gamma} $ and then $x$ for $-\sqrt{-\gamma}$  we obtain two inclusions  $g_{1}(y)\sqrt{-\gamma}+g_{0}(y)\in\mathbb{K}$ and $-g_{1}(y)\sqrt{-\gamma}+g_{0}(y)\in\mathbb{K}$. It follows from these  inclusions that $g_{0}(y)\in\mathbb{K}$ and $g_{1}(y)\in\mathbb{K}$. But then  from (\ref{eq21}) we see that $Q_{0}(y), Q_{1}(y)\in\mathbb{K}$. The equality (\ref{eq21}) shows also that $g_{1}=0$ and $Q_{1}=0$, i.e., $g(x, y)=c_{1}$ and $Q(x, y)  =c_{2}$ for some $c_{1}, c_{2}\in\mathbb{K}$. Therefore the unimodular row $(x^{2}+\gamma, g)$ can be reduced (by elementary transformations) to the row $(1, 0)$. 
	
	Now let $\gamma=0$, i.e., $f (x, y)=x^{2}$. Putting $x=0$ in the equality (\ref{eq21}) we get $Q_{0}(y)g_{0}(y)=1$. Thus $Q_{0}, g_{0}\in\mathbb{K}^{*}$. The latter means that $g=x\psi (y)+\delta$, where $\psi (y)=Q_{1} (y)$ and $\delta =Q_{0}$. Note that the unimodular row associated with $(x^{2}, x\psi (y)+\delta)$ is the row $(\frac{x\psi (y)-\delta}{\delta^{2}},\delta ^{-2}\psi^{2} (y))$ because the matrix $\begin{pmatrix}\begin{array}{cc} x^{2} & x\psi (y)+\delta\\ \delta^{-2} (x\psi (y)-\delta) & \delta ^{-2}\psi^{2} (y) \end{array}\end{pmatrix}$ has the  determinant  $1.$
\end{proof}

\section{ The main theorem}
We need to consider the last case when the unimodular row is of the form $(xy+\gamma , g(x, y)).$
\begin{lemma}\label{lem7}
	Let $(f, g)$ be a unimodular row with $f(x, y)=xy+\gamma$, $\gamma\in\mathbb{K}$. Then this row can be reduced by elementary transformations to the unimodular row $(x y+\gamma, x^{k})$  or to the row  $(x y+\gamma, (-\gamma^{-1} y)^{k})$ with integer $k\ge 2,$  or 
	to the row $(1, 0)$.
\end{lemma}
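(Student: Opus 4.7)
The plan is to follow the strategy of Lemmas \ref{lem5} and \ref{lem6}: first reduce $g$ modulo $f=xy+\gamma$ by elementary transformations to a canonical form, and then use the unimodular condition to classify that form.

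For the reduction, I would observe that $xy\equiv -\gamma\pmod{f}$, so every monomial $x^{i}y^{j}$ with $\min(i,j)\ge 1$ is congruent modulo $f$ to either $(-\gamma)^{\min(i,j)}x^{i-j}$ (if $i\ge j$) or $(-\gamma)^{\min(i,j)}y^{j-i}$ (if $j>i$). Consequently $g=p(x)+q(y)+f\cdot r(x,y)$ for some polynomials $p,q,r$ with $q(0)=0$, and the elementary transformation $(f,g)\mapsto(f,g-rf)$ replaces $g$ by $p(x)+q(y)$. So without loss of generality $g=p(x)+q(y)$ with $q(0)=0$.

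I would next split into cases. If $\gamma=0$, so $f=xy$, then substituting $y=0$ and $x=0$ into the identity $Pf+Qg=1$ forces $p(x)=c_{1}\in\mathbb{K}^{*}$ and $q(y)=0$; hence $g=c_{1}$ and $(xy,c_{1})$ reduces to $(1,0)$ by a short sequence of elementary operations. If $\gamma\ne 0$, I would substitute $y=-\gamma/x$ into $Pf+Qg=1$, obtaining
$$Q(x,-\gamma/x)\bigl(p(x)+q(-\gamma/x)\bigr)=1$$
in $\mathbb{K}[x,x^{-1}]$. Clearing denominators by multiplication by a high enough power of $x$ turns this into a factorization of a monomial in $\mathbb{K}[x]$, hence each factor is a monomial: $p(x)+q(-\gamma/x)=cx^{k}$ for some $c\in\mathbb{K}^{*}$, $k\in\mathbb{Z}$. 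Separating nonnegative and strictly negative powers of $x$ (which come respectively from $p(x)$ and from $q(-\gamma/x)$, the latter because $q(0)=0$) yields two subcases: if $k\ge 0$, then $q=0$ and $p(x)=cx^{k}$; if $k\le -1$, then $p=0$ and a direct computation gives $q(y)=c(-\gamma^{-1}y)^{|k|}$.

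Finally I would absorb the scalar $c$ by elementary operations, using that $SL_{2}(\mathbb{K})$ is generated by elementary matrices, obtaining $(f,x^{k})$ or $(f,(-\gamma^{-1}y)^{|k|})$. The small cases $|k|\le 1$ reduce further to $(1,0)$: for $k=0$ directly; for $k=1$ via $(f,x)\mapsto(f-yx,x)=(\gamma,x)\mapsto(1,0)$; and for $k=-1$ via $(f,y)\mapsto(\gamma,y)\mapsto(1,0)$. The remaining cases $|k|\ge 2$ yield exactly the forms in the statement. I expect the main obstacle to be the rigorous "clearing denominators" step, which amounts to identifying $\mathbb{K}[x,y]/(xy+\gamma)$ with the Laurent polynomial ring $\mathbb{K}[x,x^{-1}]$ and invoking the classification of its units as $\mathbb{K}^{*}\cdot x^{\mathbb{Z}}$; the rest is case-by-case bookkeeping.
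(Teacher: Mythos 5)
Your proposal follows essentially the same route as the paper: reduce $g$ modulo $f=xy+\gamma$ by elementary transformations to $p(x)+q(y)$, substitute $y=-\gamma/x$ in $Pf+Qg=1$, invoke that the units of $\mathbb{K}[x,x^{-1}]$ are $\alpha x^{k}$, treat $\gamma=0$ by setting $x=0$ and $y=0$, and dispose of $|k|\le 1$ by hand (the paper uses Corollary \ref{cor1} there); the only cosmetic difference is that you skip the paper's reduction of $Q$ to $u(x)+v(y)$, which is indeed not needed for the substitution. The one step to flag is your ``absorb the scalar $c$'' move: a constant matrix in $SL_{2}(\mathbb{K})$ acting on the row $(f,cx^{k})$ that turns the second entry into $x^{k}$ necessarily multiplies the first entry by $c$, so $SL_{2}(\mathbb{K})\subseteq E_{2}$ does not by itself yield $(xy+\gamma,x^{k})$ with the first entry unchanged. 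The paper's own proof silently drops this same unit, so you are no worse off; the honest fixes are either to keep the canonical form $(xy+\gamma,\alpha x^{k})$ at the level of rows, or to normalize $\alpha$ by the automorphism $x\mapsto\lambda x$, $y\mapsto\lambda^{-1}y$ with $\lambda^{k}=\alpha^{-1}$ (available since $\mathbb{K}$ is algebraically closed, and fixing $xy+\gamma$), or, at the matrix level of Theorem \ref{th1}, to use constant matrices $\mathrm{diag}(\lambda,\lambda^{-1})$ on both sides, which does preserve the $(1,1)$ entry while rescaling the $(1,2)$ entry.
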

\begin{proof}
	By the conditions of the lemma we have an equality of the form
	\begin{equation}\label{eq4}
		P (x, y)(x y+\gamma )+Q(x, y)g(x, y)=1
	\end{equation}
	for some polynomials $P, Q\in\mathbb{K}[x, y]$. Write down the polynomial $g(x, y)$ in the form $g(x, y)=\varphi (x) +\psi (y)+x y h(x, y)$ for some polynomials $\varphi (x), \psi (y), h(x, y)\in\mathbb{K}[x, y]$. Then we get the equality
	$$(x y+\gamma, g)\begin{pmatrix}\begin{array}{cc} 1 & -h(x, y)\\ 0 & 1 \end{array}\end{pmatrix} = (x y+\gamma, \varphi(x)+\psi(y)-\gamma h(x, y)).$$
	If $h (x, y)\ne 0$ we can write $h (x, y)=\varphi_{1}(x)+\psi_{1}(y)+x y h_{1}(x, y)$ and repeat the previous considerations. As a result,  we may assume without loss of generality that $g (x, y)=\varphi (x)+\psi (y)$. Analogously repeating considerations from the proof of Lemma \ref{lem6} we may assume that $Q (x, y)=u (x) + v (y)$ for some polynomials $u (x), v (y)\in\mathbb{K}[x, y]$.
	
	First, let $\gamma\ne 0$. Let us put $y=-\gamma /x$ in the equality (\ref{eq4}). We get $(u(x)+ v (-\gamma /{x}))(\varphi (x)+\psi (-\gamma /{x}))=1$.
	One can easily prove that an element $p (x, x^{-1})$ from ring $\mathbb{K}[x, x^{-1}]$ is invertible in this ring if and only if $p=\alpha x^{k}$ for some $k\in\mathbb{Z}$, $\alpha\in\mathbb{K}^{*}$. So, we have $g (x, y)=x^{k}$, $Q (x, y)=(-\gamma^{-1} y)^{k}$ or $g  (x, y)=(-\gamma^{-1} y)^{k}$, $Q (x, y)=x^{k}$ for some $k\ge 0$. In any case, the polynomial $P (x, y)$ is of the form 
	$$P(x, y)=\frac{1-(-\gamma^{-1} x y)^k}{\gamma+x y}=\gamma^{-1}\left(1+\left(-\frac{x y}{\gamma}\right)+\dots +\left(-\frac{x y}{\gamma}\right)^{k-1}\right).$$
	As a result, we get two unimodular rows:
	
	1) $(x y+\gamma, x^{k})$ with the associated row $\left( -(-\gamma^{-1} y)^{k},     \frac{1-(-\gamma^{-1} x y)^{k}}{\gamma+x y}, \right)$;
	
	2) $(x y+\gamma, (-\gamma^{-1} y)^{k})$ with the associated row $\left( -x^{k},  \frac{1-(-\gamma^{-1} x y)^k}{\gamma+x y}, \right)$.
	
	Note that one can assume that $k\geq 2 $. Really, in other case the row $(x y+\gamma, x^{k})$ is reduced to the row $(1, 0)$  because of Corollary \ref{cor1}. 
	Let now $\gamma =0$. Let us replace $x$ with 0 in the equality (\ref{eq4}). Then we have $(u(0)+v(y))(\varphi (0)+\psi (y))=1$. This equality implies obviously $v(y), \psi (y)\in\mathbb{K}$. Analogously after substituting 0 instead of $y$ in (\ref{eq4}) we get $v(x), \psi (x)\in\mathbb{K}$. We see that in this case the polynomial $g(x, y)$ is constant  and therefore the  unimodular row can be reduced to the row $(1, 0)$. The proof is complete.
\end{proof}

\begin{theorem}\label{th1}
	Let $A=\begin{pmatrix}\begin{array}{cc} a_{1\,1}(x, y) & a_{1\,2}(x, y)\\ a_{2\,1}(x, y) & a_{2\,2}(x, y) \end{array}\end{pmatrix}\in\text{SL}_{2}\left(\mathbb{K}[x, y]\right)$. If $\deg a_{i\,j}=2$ for some $i,j\in \{1, 2\}, $ then there exists an automorphism $\theta\in\Aut\left(\mathbb{K}[x, y]\right)$ such that $A^{\theta}$ is one of the types:
	
	1) $A^{\theta}=B_{1} B_{2}\dots B_{k}$, $k\ge 1$, $B_{i}$ are elementary matrices;
	
	2) $A^{\theta}=B_{1}\dots B_{s} C B_{s+1}\dots B_{k}$, where $B_{1}\dots B_{s}$, $B_{s+1}\dots B_{k}$ are elementary matrices and $C$ is one of the form:
	
	a) $\begin{pmatrix}\begin{array}{cc} x^{2} & x\psi(y)+\delta\\ 
			\frac{x\psi(y)-\delta}{\delta^{2}} & \frac{\psi(y)^{2}}{\delta ^{2}} \end{array}\end{pmatrix}$  b) $\begin{pmatrix}\begin{array}{cc} x y+\gamma &  x^k \\
			-(-\gamma^{-1} y)^{k}  &  \frac{1-(-\gamma^{-1} x y)^k}{\gamma+x y} \end{array}\end{pmatrix}$
	
	for some $\delta , \gamma\in\mathbb{K}^{*}$, $\psi (y)\in\mathbb{K}[x, y]$, $k\in\mathbb{Z} $. 
\end{theorem}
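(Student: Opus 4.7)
The plan is to use Lemmas \ref{lem4}--\ref{lem7} to classify the first row of $A$ up to a coordinate change and right multiplication by elementary matrices, and then to lift this row-level classification to a matrix decomposition of the form demanded by the theorem.

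First I would move the low-degree entry to position $(1,1)$. A signed swap of rows or columns of a matrix in $SL_2(\mathbb{K}[x,y])$ is realized by $\begin{pmatrix}0 & 1\\ -1 & 0\end{pmatrix}$, which factors as a product of three elementary matrices, so such swaps can be absorbed into the elementary factors of the final decomposition. I may therefore assume that the first row $(f,g):=(a_{11},a_{12})$ is unimodular with $\deg f\le 2$. The case $\deg f\le 1$ follows quickly from Corollary \ref{cor1} together with the fact that $SL_2(\mathbb{K})$ is generated by elementary matrices, so I focus on $\deg f=2$ and invoke Lemma \ref{lem4}, which supplies an affine $\theta\in\Aut(\mathbb{K}[x,y])$ bringing $f$ to one of $x^2+y$, $x^2+\gamma$, or $xy+\gamma$. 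Passing to $A^\theta$ preserves $SL_2$, preserves unimodularity by Lemma \ref{lem1}(2), and sends elementary matrices to elementary matrices, so I may assume $f$ itself is in one of these three canonical forms.

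Next I would apply the corresponding row-classification lemma. Lemma \ref{lem5} handles $f=x^2+y$ and reduces the row to $(1,0)$; Lemma \ref{lem6} handles $f=x^2+\gamma$, reducing to $(1,0)$ when $\gamma\neq 0$ and to $(x^2,x\psi(y)+\delta)$ when $\gamma=0$; Lemma \ref{lem7} handles $f=xy+\gamma$ and reduces to $(1,0)$, to $(xy+\gamma,x^k)$, or to $(xy+\gamma,(-\gamma^{-1}y)^k)$. In the last subcase I would compose $\theta$ with the involution $\sigma:x\mapsto -\gamma^{-1}y,\ y\mapsto -\gamma x$, which fixes $xy+\gamma$ and sends $-\gamma^{-1}y$ to $x$, collapsing that subcase into the preceding one.

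Finally I would turn the row reduction into a matrix decomposition. If $B_1,\dots,B_k$ are the elementary matrices produced by the chosen lemma, then $A^\theta B_1\cdots B_k$ has its first row in one of the canonical forms above. If that first row is $(1,0)$, the determinant condition forces $A^\theta B_1\cdots B_k=\begin{pmatrix}1 & 0\\ c & 1\end{pmatrix}$ for some $c\in\mathbb{K}[x,y]$, and moving the $B_i^{-1}$'s to the right yields case 1. Otherwise $A^\theta B_1\cdots B_k$ has the same first row as the canonical matrix $C$ of case 2a or 2b; since any two matrices of $SL_2$ with a common first row differ only by left multiplication by a single lower-unipotent elementary matrix $L$ (their second rows differing by a multiple of the first), we get $A^\theta=L\cdot C\cdot B_k^{-1}\cdots B_1^{-1}$, which is case 2. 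The hardest step is the reduction inside Lemma \ref{lem7} together with the handling of its $(-\gamma^{-1}y)^k$ branch via the extra automorphism $\sigma$; the translation from row-level to matrix-level statements is then routine bookkeeping with elementary factors on either side of $C$.
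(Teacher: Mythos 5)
Your proposal is correct and follows essentially the same route as the paper: reduce to the $(1,1)$ entry via the swap matrix, normalize it with the affine automorphism of Lemma \ref{lem4}, classify the first row by Lemmas \ref{lem5}--\ref{lem7}, collapse the $(xy+\gamma,(-\gamma^{-1}y)^k)$ branch with the automorphism $x\mapsto-\gamma^{-1}y,\ y\mapsto-\gamma x$, and lift the row reduction to a matrix factorization. Your explicit justification of the row-to-matrix step (two matrices in $SL_2$ with the same unimodular first row differ by one lower-unipotent elementary factor) is in fact spelled out more carefully than in the paper's own proof.
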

\begin{proof}
	Multiplying the matrix $A$ from the left or from the right by the matrix $\begin{pmatrix}\begin{array}{cc} 0 & 1\\ -1 & 0 \end{array}\end{pmatrix}$ we can assume without loss of generality that $i=j=1$, i.e. $\deg a_{1\,1}\le 2$. Applying a linear automorphism $\theta$ to the matrix $A$  we can reduce (by Lemma \ref{lem3}) the element $a_{1\,1} (x, y)$ to one of the forms
	
	1) $a_{1\,1} (x, y)=x^{2}+y$, 
	
	2) $a_{1\,1} (x, y)=x^{2}+\gamma$,
	
	3) $a_{1\,1} (x, y)=x y+\gamma$.
	
	First, let $a_{1\,1} (x, y)=x^{2}+y$. Then applying Lemma \ref{lem5} to the first row of the matrix $A$ we get the matrix $\begin{pmatrix}\begin{array}{cc} 1 & 0\\ b(x, y) & 1 \end{array}\end{pmatrix}$ for a polynomial $b(x, y)\in \mathbb K[x, y]$, recall that multiplying from the left by elementary matrices makes elementary transformations in the first and second rows of $A$.
	The latter means that $A$ is a product of elementary matrices.
	In the case $a_{1\,1} (x, y)=x^{2}+\gamma$, $\gamma\in\mathbb{K}$ we get either a product of elementary matrices $A=B_{1}\dots B_{k}$ or a product of the form $A=B_{1}\dots B_{i-1} C B_{i+1}\dots B_{k}$, where $B_{i}$ are elementary matrices and $C$ is of the form
	$$C=\begin{pmatrix}\begin{array}{cc} x^{2} & x\psi(y)+\delta\\ 
			\delta^{-2}(x\psi(y)-\delta) & \delta ^{-2}\psi(y)^{2} \end{array}\end{pmatrix}.$$
	By Lemma \ref{lem7}, the last case $a_{1\,1} (x, y)=x y+\gamma$, $\gamma\in\mathbb{K}$  yields the product $A=B_{1}\dots B_{i-1} C B_{i+1}\dots B_{k}$ with $C$ of the form 
	$$C_1=\begin{pmatrix}\begin{array}{cc} x y+\gamma & x^{k}\\ 
			-(-\gamma^{-1} y)^{k}   &   \frac{1-(-\gamma^{-1} x y)^k}{\gamma+x y} \end{array}\end{pmatrix} $$
	or of the form
	$$C_2=\begin{pmatrix}\begin{array}{cc} x y+\gamma & (-\gamma^{-1} y)^{k} \\ 
			-x^{k}  &   \frac{1-(-\gamma^{-1} x y)^k}{\gamma+x y} \end{array}\end{pmatrix} $$
	
	Note that the matrices
	$C_{1}$  and $C_{2}$  are conjugated by the automorphism $\theta : x\mapsto -\gamma ^{-1}y,\ y\mapsto -\gamma x$. The proof is complete.
\end{proof}

\section{Formulas for  associated rows}

If a unimodular row $(f, g)$ is given, then there exists a unimodular row $(-Q, P)$ such that $P f+Q g=1$ (then the matrix $\begin{pmatrix}\begin{array}{cc} f & g\\ -Q & P \end{array}\end{pmatrix}$ has  determinant of 1).
Such a row $(-Q, P)$ is unique up to a row $(-\lambda Q, \lambda P)$ for an arbitrary polynomial $\lambda\in\mathbb{K}[x, y]$.
Really, if $P' f+Q' g=1$ for a row $(P', Q')$, then $(P-P') f+(Q-Q') g=0$. By Lemma \ref{lem2},
$P-P' =\lambda g$, $Q-Q' =\lambda f$ for some $\lambda\in\mathbb{K}[x, y]$ and therefore
$$(P', Q')=(P, Q)+(-\lambda g, \lambda f).$$
Let us point out how one can write  homogeneous components of polynomials $P$, $Q$ using  homogeneous components of $g$ and $f$ respectively. We restrict ourselves only to polynomials $f$, $g$ of the same degree. Let
$\deg f=\deg g=n$. Then obviously $\deg P=\deg Q=m$ for some $m$. Write down polynomials $f , g, P, Q$ as sums of their homogeneous components
$$f=f_{0}+\dots +f_{n}, g=g_{0}+\dots +g_{n},$$
$$P=P_{0}+\dots +P_{m}, Q=Q_{0}+\dots +Q_{m}.$$
Denote $\varphi = \gcd (f_{n}, g_{n})$.
We assume that all the polynomials $f$, $g$, $P$, $Q$ are  nonconstant ones. Then by the Lemma \ref{lem3}, $\deg \varphi \ge 1.$ It turns out that  $\varphi^{i+1} P_{m-i}$ and $\varphi^{i+1} Q_{m-i}$ can be written as linear combinations of $ g_i's$  and $f_i's $, respectively, with the same polynomial coefficients.
\begin{theorem}\label{th2}
	There exist homogeneous polynomials $\alpha_{0}, \dots , \alpha_{m}$ such that for $0\le i\le m$
	\begin{equation} \label{eq:star2}
		\begin{split}
			\varphi^{i+1} P_{m-i} &=\sum_{j=0}^{\min (i, n)} \varphi^{j} \alpha_{i-j} g_{n-j}, \\
			-\varphi^{i+1} Q_{m-i} &=\sum_{j=0}^{\min (i, n)} \varphi^{j} \alpha_{i-j} f_{n-j}.
		\end{split}\tag{$\star$}
	\end{equation}
\end{theorem}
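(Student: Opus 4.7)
\smallskip
\noindent\textbf{Proof proposal.} I would argue by induction on $i$, treating the formulas (\ref{eq:star2}) as a simultaneous definition of $\alpha_i$ at each step. Throughout, write $f_n=\varphi h_1$, $g_n=\varphi h_2$ with $\gcd(h_1,h_2)=1$, as $\varphi=\gcd(f_n,g_n)$ permits, and adopt the convention that $f_k=g_k=0$ for $k<0$.

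For $i=0$, the degree-$(m+n)$ homogeneous component of $Pf+Qg=1$ reads $P_m f_n+Q_m g_n=0$, and Lemma \ref{lem2} produces a homogeneous $\alpha_0$ with $P_m=\alpha_0 h_2$, $Q_m=-\alpha_0 h_1$; multiplying by $\varphi$ gives (\ref{eq:star2}) at $i=0$. For the inductive step, assume $\alpha_0,\dots,\alpha_{i-1}$ satisfying (\ref{eq:star2}) for $j<i$ have been constructed. Comparing degree-$(m+n-i)$ components of $Pf+Qg=1$ (which vanish, since $i\geq 1$) yields
$$P_{m-i}f_n+Q_{m-i}g_n=-\sum_{l=0}^{i-1}\bigl(P_{m-l}f_{n-i+l}+Q_{m-l}g_{n-i+l}\bigr). \qquad (\dagger)$$
Introduce the residuals
$$X_i:=\varphi^{i+1}P_{m-i}-\sum_{j=1}^{\min(i,n)}\varphi^j\alpha_{i-j}g_{n-j},\qquad Y_i:=-\varphi^{i+1}Q_{m-i}-\sum_{j=1}^{\min(i,n)}\varphi^j\alpha_{i-j}f_{n-j};$$
the task is to find a homogeneous $\alpha_i$ with $X_i=\alpha_i g_n$ and $Y_i=\alpha_i f_n$.

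The core calculation is the identity $h_1X_i=h_2Y_i$. Using $\varphi h_1=f_n$ and $\varphi h_2=g_n$, the ``$P_{m-i},Q_{m-i}$''-part of $h_1X_i-h_2Y_i$ equals $\varphi^i(P_{m-i}f_n+Q_{m-i}g_n)$, into which I substitute $(\dagger)$, then replace each $\varphi^{l+1}P_{m-l}$ and $\varphi^{l+1}Q_{m-l}$ (for $l<i$) by its inductive formula. After swapping summation order, the resulting triple sum collapses to $\sum_{s=1}^{i}\varphi^{s-1}\alpha_{i-s}\cdot\bigl(\sum_{a+b=s,\,b\geq 1}(g_{n-a}f_{n-b}-f_{n-a}g_{n-b})\bigr)$. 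The inner sum is antisymmetric under $a\leftrightarrow b$, so its completion $\sum_{a+b=s,\,a,b\geq 0}$ vanishes; hence the inner sum equals the negative of the missing $(a,b)=(s,0)$ term, namely $f_{n-s}g_n-g_{n-s}f_n=\varphi(h_2f_{n-s}-h_1g_{n-s})$. This precisely cancels the explicit $\alpha_{i-j}$-contributions in $h_1X_i-h_2Y_i$ term by term, proving the identity.

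Once $h_1X_i=h_2Y_i$ is in hand, coprimality of $h_1,h_2$ forces $h_2\mid X_i$ and $h_1\mid Y_i$. Because every term on the right of $X_i$ and $Y_i$ visibly contains a factor $\varphi$ (as $i\geq 1$), write $X_i=\varphi Z_i$, $Y_i=\varphi W_i$; then $h_1Z_i=h_2W_i$, so $Z_i=h_2\alpha_i$ and $W_i=h_1\alpha_i$ for a homogeneous polynomial $\alpha_i$. Thus $X_i=g_n\alpha_i$ and $Y_i=f_n\alpha_i$, which are exactly the equalities (\ref{eq:star2}) at level $i$. The main obstacle I anticipate is the bookkeeping in the proof of $h_1X_i=h_2Y_i$: one has to manage a triple sum, swap the order of summation, and spot the vanishing antisymmetric sum $\sum_{a+b=s}(g_{n-a}f_{n-b}-f_{n-a}g_{n-b})=0$. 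Everything else is a routine application of Lemma \ref{lem2} and degree counting.
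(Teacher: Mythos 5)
Your proposal is correct and follows essentially the same route as the paper's proof: induction on $i$, extraction of the degree-$(m+n-i)$ homogeneous component of $Pf+Qg=1$, substitution of the inductive formulas so that the antisymmetric double sum cancels, and a final coprimality argument in the spirit of Lemma \ref{lem2} to produce $\alpha_i$ (up to routine sign bookkeeping in your sketched triple-sum collapse, which does work out). Your extra step of first factoring $\varphi$ out of the residuals before invoking $\gcd(h_1,h_2)=1$ is a welcome refinement, since Lemma \ref{lem2} by itself only gives divisibility by $h_2=g_n/\varphi$ rather than by $g_n$, a point the paper's own final step passes over.
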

\begin{proof}
	Induction on $i$. The case $i=0$ is a consequence of Lemma \ref{lem2}. Really, we have
	$P_{m} f_{n} + Q_{m} g_{n}=0$. Let 
	$$\varphi =\gcd (f_{n}, g_{n}), \  h_{1}=f_{n}/\varphi, \ h_{2}=g_{n}/\varphi .$$ By  Lemma \ref{lem2} $P_{m} =\psi h_{2}$, $Q_{m} =-\psi h_{1}$ for some $\psi\in\mathbb{K}[x, y]$. Then 
	$$\varphi P_{m}=\psi\varphi h_{2}=\psi g_{n}, -\varphi Q_{m}=\psi\varphi h_{1}=\psi f_{n}.$$
	Putting $\alpha_{0}=\psi$ we get the case $i=0$. Let the formulas \eqref{eq:star2} be true for $i' < i$, let us prove it for $i.$. Since $P f + Q g=1$ we have equalities for homogeneous components in the left side of the later equality: $(P f +  Q g)_{m+n-i}=0$ for $0\leq i\leq  m$. But the left side of the latter equality can be written in the form
	$$\sum_{k=0}^{\min (i, n)} \left(P_{m-i+k} f_{n-k}+Q_{m-i+k} g_{n-k}\right)=0.$$
	After multiplying this equality by $\varphi^{i+1}$ we can rewrite it for $0\leq i\leq m$ in the form 
	$$\sum_{k=0}^{\min (i, n)} \varphi^{k}\left(\varphi^{i-k+1} P_{m-i+k} f_{n-k} + \varphi^{i-k+1} Q_{m-i+k} g_{n-k}\right)=0.$$
	Replacing $P_{m-i+k}$ and $Q_{m-i+k}$, $k\ge 1$ by their expressions due to the induction hypothesis we obtain the equality (we denote $min (i, n)$ by $i\wedge n $  for brevity in the next part of the proof):
	\begin{multline*}
		0=\varphi^{i+1}\left(P_{m-i}f_{n}+Q_{m-i}g_{n}\right)+\\
		+\sum_{k=1}^{i\wedge n}\varphi^{k}\left(f_{n-k}\sum_{j=0}^{i\wedge n}\varphi^{j}\alpha_{i-k-j}g_{n-j}-g_{n-k}\sum_{j=0}^{i\wedge n}\varphi^{j}\alpha_{i-k-j}f_{n-j}\right).
	\end{multline*}
	The last equality can be rewritten in the form
	\begin{multline*}
		\varphi^{i+1}\left(P_{m-i}f_{n}+Q_{m-i}g_{n}\right)+g_{n}\sum_{k=1}^{i\wedge n}\varphi^{k}\alpha_{i-k}f_{n-k}-f_{n}\sum_{k=1}^{i\wedge n}\varphi^{k}\alpha_{i-k}g_{n-k}+ \\
		+\underset{\substack{1\le j,k\le n\\ j+k\le i}}{\sum}\varphi^{j+k}\alpha_{i-k-j}f_{n-k}g_{n-j}-
		\underset{\substack{1\le j,k\le n\\j+k\le i}}{\sum}\varphi^{j+k}\alpha_{i-k-j}f_{n-j}g_{n-k}=0.
	\end{multline*}
	Note that the last two sums in this equality give as result 0 and we can write the last equality as 
	\begin{multline*}
		\left(\varphi^{i+1}P_{m-i}-\sum_{k=1}^{i\wedge n}\varphi^{k}\alpha_{i-k}g_{n-k}\right)f_{n}+\\
		+\left(\varphi^{i+1}Q_{m-i}-\sum_{k=1}^{i\wedge n}\varphi^{k}\alpha_{i-k}f_{n-k}\right)g_{n}=0.
	\end{multline*}
	It follows from Lemma \ref{lem2} that there exists a polynomial $\alpha_{i}$ such that
	$$\varphi^{i+1}P_{m-i}-\sum_{k=1}^{i\wedge n}\varphi^{k}\alpha_{i-k}g_{n-k}=\alpha_{i}g_{n},$$ $$\varphi^{i+1}Q_{m-i}+\sum_{k=1}^{i\wedge n}\varphi^{k}\alpha_{i-k}f_{n-k}=- \alpha_{i}f_{n}.$$
	These equalities can be rewritten (in the initial notation) in the form
	$$\varphi^{i+1}P_{m-i}=\sum_{k=0}^{\min(i,n)}\varphi^{k}\alpha_{i-k}g_{n-k},\qquad-\varphi^{i+1}Q_{m-i}=\sum_{k=0}^{\min(i,n)}\varphi^{k}\alpha_{i-k}f_{n-k}.$$
	The proof is complete.
\end{proof} 


%
\end{document}